\title{\LARGE \bf
Probabilistic Guarantees for Nonlinear Safety-Critical Optimal Control
}
\author{Prithvi Akella$^{*}$, Wyatt Ubellacker$^{*}$, and Aaron D. Ames$^{1}$
\thanks{This work was supported by the AFOSR Test and Evaluation Program, grant FA9550-19-1-0302}%
\thanks{*Both authors contributed equally.}
\thanks{$^{1}$All authors are with the California Institute of Technology
        {\tt\small \{pakella, wubellac, ames\}@caltech.edu}}%
}
\begin{document}

\maketitle
\thispagestyle{empty}
\pagestyle{empty}

\begin{abstract}

Leveraging recent developments in black-box risk-aware verification, we provide three algorithms that generate probabilistic guarantees on (1) optimality of solutions, (2) recursive feasibility, and (3) maximum controller runtimes for general nonlinear safety-critical finite-time optimal controllers.  These methods forego the usual (perhaps) restrictive assumptions required for typical theoretical guarantees, \textit{e.g.} terminal set calculation for recursive feasibility in Nonlinear Model Predictive Control, or convexification of optimal controllers to ensure optimality.  Furthermore, we show that these methods can directly be applied to hardware systems to generate controller guarantees on their respective systems.

\end{abstract}

\section{Introduction}
From Kalman till date, the pursuit of theoretical guarantees for optimal controllers has fascinated the controls and robotics communities alike~\cite{lewis2012optimal,kalman1960contributions,locatelli2002optimal,sethi2019optimal}.  This fascination arises as optimal controllers provide a natural way of expressing and segmenting disparate control objectives, as can be easily seen in works regarding model predictive control (MPC)~\cite{camacho2013model,rawlings2000tutorial,garcia1989model}, control barrier functions~\cite{ames2016control,xu2015robustness,grandia2021multi}, and optimal path planning~\cite{raja2012optimal,noreen2016optimal,riviere2020glas}, among others.  However, optimization problems becoming central to controller synthesis resulted in newer problems such as determining whether solutions exist, \textit{e.g.} recursive feasibility in MPC, determining the efficiency with which solutions can be identified to inform control loop rates, and determining the optimality of identified solutions in non-convex optimization settings.

Recent years have seen tremendous strides in answering these questions, but areas of improvement still exist.  For example, advances in Nonlinear MPC still require assumptions on the existence of control invariant terminal sets and stabilizing controllers for recursive feasibility, though identification of such items for general nonlinear systems remains a difficult problem~\cite{maiworm2015scenario,esterhuizen2020recursive,fang2022model,yu2015nonlinear,lucia2020stability}.  In general, determination of solution optimality for MPC problems is equivalent to solving the Hamilton-Jacobi-Bellman equation which is known to be difficult~\cite{kirk2004optimal}.  For path-planning problems, RRT* and other, sampling-based methods are known to be probabilistically complete, \textit{i.e.} they will produce the optimal solution given an infinite runtime, though sample-complexity results for sub-optimal solutions are few~\cite{elbanhawi2014sampling,noreen2016optimal,karaman2011anytime}.  Finally, there are similarly few theoretical results on the time complexity of these controllers on hardware systems, as such an analysis is heavily dependent on the specific hardware.

\newidea{Our Contribution:} Here, the authors believe recent results in black-box risk-aware verification might prove useful in generating theoretical statements on recursive feasibility, provable sub-optimality of results, and time complexity of the associated controllers on hardware systems, without the need for restrictive assumptions.  Our results are threefold.
\begin{itemize}
    \item We provide theoretical guarantees on the provable sub-optimality of percentile-based optimization procedures~\cite{akella2022sample} on producing input sequences for general, finite-time optimal control problems.
    \item We provide an algorithm for determining the probability with which a black-box controller is successively feasible on existing system hardware.
    \item We provide an algorithm to determine a probabilistic upper bound on hardware-specific controller runtimes.
\end{itemize}

\begin{figure}[t]
    \centering
    \includegraphics{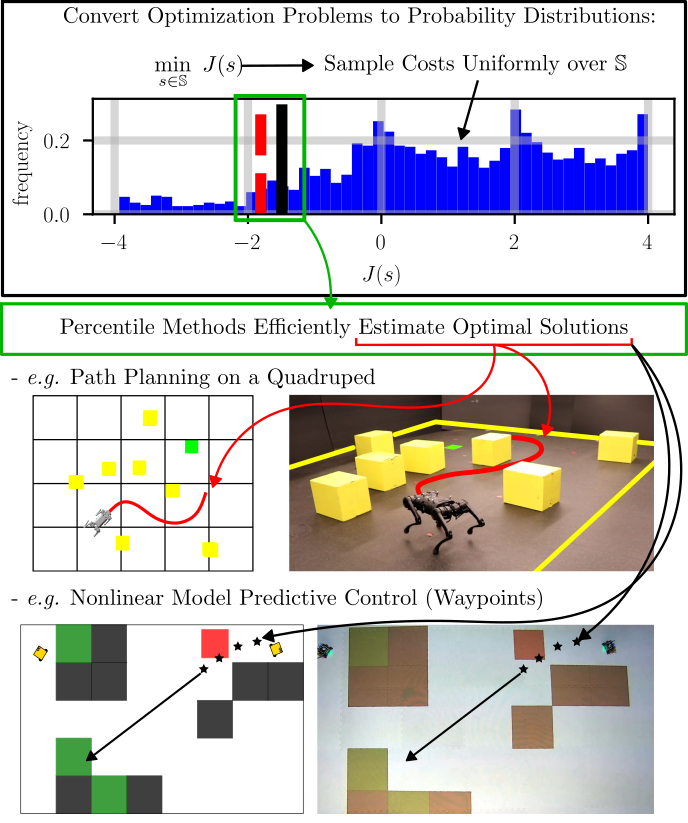}
    \caption{Finite-time optimal controllers and their guarantees can be expressed as optimization problems.  We provide probabilistic guarantees on solutions to these problems using novel results in black-box risk-aware verification.}
    \vspace{-0.2 in}
    \label{fig:title}
\end{figure}

\newidea{Structure:} To start, Section~\ref{sec:motivation} motivates and formally states the problems under study in this paper, and the introduction to Section~\ref{sec:all_guarantees} provides the general theorem employed throughout.  Then, Section~\ref{sec:percentile_input} details our algorithm that provides probabilistic guarantees on the optimality of outputted solutions to nonlinear safety-critical finite-time optimal control problems.  Likewise, Section~\ref{sec:recursive_feasibility} details our algorithm that provides probabilistic guarantees on successive feasibility for the same type of optimal controllers.  Finally, Section~\ref{sec:time_complexity} details our algorithm that provides probabilistic guarantees on maximum controller runtimes.  Lastly, we portray all our theoretical results on hardware, as described for the quadrupedal example in Section~\ref{sec:quadruped} and for the Robotarium in Section~\ref{sec:robotarium}~\cite{wilson2020robotarium}.

\section{General Motivation and Problem Statements}
\label{sec:motivation}
We assume the existence of a nonlinear discrete-time system whose dynamics $f$ are (potentially) unknown:
\begin{equation}
    \label{eq:general_sys}
    x_{k+1} = f(x_k,u_k,d),~x \in \mathcal{X},~u \in \mathcal{U},~d \in \mathcal{D}. 
\end{equation}
Here, $\mathcal{X}\subseteq \mathbb{R}^n$ is the state space, $\mathcal{U} \subseteq \mathbb{R}^m$ is the input space, and $\mathcal{D} \subseteq \mathbb{R}^p$ is the space of variable objects in our environment that we can control, \textit{e.g.} center locations of obstacles and goals for path-planning examples, variable wind-speeds for a drone, \textit{etc}.  Provided this dynamics information, a cost $J$, state constraints, and input constraints, one could construct a Nonlinear Model Predictive Controller of the following form (with $j \in [0,1,\dots,H-1]$):
\begin{align}
\label{eq:gen_NMPC} \tag{NMPC}
\mathbf{u}^* = & \argmin_{\mathbf{u}=(u^0,u^1,\dots,u^{H-1}) \in \mathcal{U}^H}~& & J(\mathbf{u}, x_k, d), \\
&~ \qquad \mathrm{subject~to~} & & x^{j+1}_{k} = f(x^j_k,u^j,d), \\
& & & x^0_k = x_k, \\
& & & x^{j+1}_k \in \mathcal{X}^{j+1}_k, \\
& & & u^j \in \mathcal{U}.
\end{align}
\noindent For the analysis to follow, however, we note that the general NMPC problem posed in~\eqref{eq:gen_NMPC} can be posed as the following Finite-Time Optimal Control Problem.
\begin{align}
\label{eq:general_FTOCP} \tag{FTOCP}
& \argmin_{\mathbf{u}=(u^0,u^1,\dots,u^{H-1}) \in \mathcal{U}^H}~& & J(\mathbf{u}, x_k, d), \\
& ~\qquad \mathrm{subject~to~} & & \mathbf{u} \in \mathbb{U}(x_k,d) \subseteq \mathcal{U}^H.
\end{align}
Here, $J$ is a bounded (perhaps) nonlinear cost function, and $\mathbb{U}$ is a set-valued function outputting a constraint space for input sequences that (potentially) depends on the initial system and environment states $(x_k,d)$, respectively.  Specific examples following this general form will be provided in Sections~\ref{sec:quadruped} and~\ref{sec:robotarium}.  Finally, $H > 0$ is the horizon length for the finite-time optimal control problem.  Then, the three problem statements predicated on this optimal controller~\eqref{eq:general_FTOCP} follow.  
\begin{problem}
\label{prob:percentile}
Develop a procedure to identify input sequences $\mathbf{u}$ that are in the $100(1-\epsilon)\%$-ile for some $\epsilon \in (0,1]$ with respect to solving~\eqref{eq:general_FTOCP}.
\end{problem}
\begin{problem}
\label{prob:recursive_feasibility}
Develop a procedure to determine whether~\eqref{eq:general_FTOCP} is recursively feasible.
\end{problem}
\begin{problem}
    \label{prob:runtimes}
    Develop a procedure to upper bound maximum controller runtimes for optimal controllers of the form in~\eqref{eq:general_FTOCP}.
\end{problem}

\section{Probabilistic Guarantees}
\label{sec:all_guarantees}
To make progress on the aforementioned problem statements --- each will be addressed in a separate subsection to follow --- we will first state a general result combining existing results on black-box risk-aware verification.  To that end, consider the following optimization problem:
\begin{equation}
    \label{eq:general_opt}
    \min_{\decisionvariable \in \decisionspace}~J(\decisionvariable),
\end{equation}
subject to the following assumption:
\begin{assumption}
\label{assump:percent_assump}
The decision space $\decisionspace$ is a set with bounded volume, \textit{i.e.} $\int_{\decisionspace}~1~ds = V_{\decisionspace} < \infty$ or $\decisionspace$ has a finite number of elements.  Furthermore, the cost function $J$ is bounded over $\decisionspace$, \textit{i.e.} $\exists~m,M \in \mathbb{R}, \suchthat m\leq J(\decisionvariable) \leq M,~\forall~\decisionvariable \in \decisionspace$.
\end{assumption}

This assumption permits us to define the functions $\mathcal{V}, F$ corresponding to the volume fraction occupied by a subset $A$ of $\decisionspace$ and the set of strictly better decisions for a provided decision $\decisionvariable' \in \decisionspace$, respectively:
\begin{gather}
    \label{eq:volume_fraction}
    \mathcal{V}(A) = \frac{\int_A~1~ds}{\int_{\decisionspace}~1~ds}, \\
    \label{eq:falsifying_set}
    F(\decisionvariable') = \{\decisionvariable \in \decisionspace~|~J(\decisionvariable) < J(\decisionvariable')\}.
\end{gather}
Naturally then, for a given decision $\decisionvariable' \in \decisionspace$, were $\mathcal{V}(F(\decisionvariable')) \leq \epsilon$ for some $\epsilon \in (0,1]$, \textit{i.e.} $\decisionvariable'$ is such that the volume fraction of strictly better decisions is no more than $\epsilon$, then $\decisionvariable'$ would be in the $100(1-\epsilon)\%$-ile with respect to minimizing $J$.  Likewise, the associated minimum cost of such a decision $J(\decisionvariable')$ should also be a probabilistic lower bound on achievable costs.  Both of these notions are expressed formally in the theorem below, which combines similar results from~\cite{akella2022sample,akella2022scenario}.
\begin{theorem}
\label{thm:prob_optimality}
Let $\{(\decisionvariable_i,J(\decisionvariable_i))\}_{i=1}^N$ be a set of $N$ decisions and costs for decisions $\decisionvariable_i$ sampled via $\uniform[\decisionspace]$, with $\zeta^*_N$ the minimum sampled cost and $\decisionvariable^*_N$ the (perhaps) non-unique decision with minimum cost.  Then $\forall~\epsilon \in [0,1]$, the probability of sampling a decision whose cost is at-least $\zeta^*_N$ is at minimum $1-\epsilon$ with confidence $1-(1-\epsilon)^N$, \textit{i.e.}
\begin{equation}
    \label{eq:prob_ver}
    \prob^N_{\uniform[\decisionspace]}
    \left[\prob_{\uniform[\decisionspace]}\left[J(\decisionvariable) \geq \zeta^*_N\right] \geq 1-\epsilon \right] \geq 1-(1-\epsilon)^N.
\end{equation}
Furthermore, $\forall~\epsilon \in (0,1]$, $\decisionvariable^*_N$ is in the $100(1-\epsilon)\%$-ile with minimum confidence $1-(1-\epsilon)^N$, \textit{i.e.}
\begin{equation}
    \label{eq:percent_opt}
    \prob^N_{\uniform[\decisionspace]}\left[\mathcal{V}(F(\decisionvariable^*_N)) \leq \epsilon \right] \geq 1-(1-\epsilon)^N.
\end{equation}
\end{theorem}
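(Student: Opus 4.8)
The plan is to first notice that the two displayed bounds are really the same statement in disguise, and then to prove that single statement by collapsing the $N$-sample claim onto a one-sample quantile estimate. First I would exploit that, since $\decisionvariable \sim \uniform[\decisionspace]$, a probability under this law equals the corresponding volume fraction; in particular, because $\zeta^*_N = J(\decisionvariable^*_N)$ and $F$ is defined with a \emph{strict} inequality, $\prob_{\uniform[\decisionspace]}[J(\decisionvariable) < \zeta^*_N] = \mathcal{V}(F(\decisionvariable^*_N))$, so $\prob_{\uniform[\decisionspace]}[J(\decisionvariable) \geq \zeta^*_N] = 1 - \mathcal{V}(F(\decisionvariable^*_N))$. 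Hence the inner event of~\eqref{eq:prob_ver}, namely $\{\prob_{\uniform[\decisionspace]}[J(\decisionvariable)\geq \zeta^*_N]\geq 1-\epsilon\}$, coincides \emph{exactly} with the event $\{\mathcal{V}(F(\decisionvariable^*_N)) \leq \epsilon\}$ of~\eqref{eq:percent_opt}. The two confidence bounds are therefore identical (the $\epsilon=0$ corner of~\eqref{eq:prob_ver} being vacuous, as $1-(1-0)^N=0$), and it suffices to establish~\eqref{eq:percent_opt}.

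Next I would reduce from $N$ samples to one. Writing $G(y) = \prob_{\uniform[\decisionspace]}[J(\decisionvariable) < y]$, which is nondecreasing, each sample satisfies $\mathcal{V}(F(\decisionvariable_i)) = G(J(\decisionvariable_i))$; since $\decisionvariable^*_N$ attains the minimum sampled cost and the minimum is realized by some sample, monotonicity gives $\mathcal{V}(F(\decisionvariable^*_N)) = G\!\left(\min_i J(\decisionvariable_i)\right) = \min_i \mathcal{V}(F(\decisionvariable_i))$, with no special treatment needed for ties. Using that the samples are i.i.d.\ and that the minimum exceeds $\epsilon$ iff every term does, independence yields
\[
\prob^N_{\uniform[\decisionspace]}\!\left[\mathcal{V}(F(\decisionvariable^*_N)) > \epsilon\right] = \left(\prob_{\uniform[\decisionspace]}\!\left[\mathcal{V}(F(\decisionvariable)) > \epsilon\right]\right)^{N},
\]
so the whole theorem reduces to the single-sample estimate $\prob_{\uniform[\decisionspace]}[\mathcal{V}(F(\decisionvariable)) > \epsilon] \leq 1-\epsilon$.

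Finally I would prove that one-sample bound with a quantile argument. Set $y_\epsilon = \inf\{y : G(y) > \epsilon\}$ (the claim being trivial when no such $y$ exists). Left-continuity and monotonicity of $G$ give $G(y)>\epsilon$ for $y>y_\epsilon$ and $G(y)\leq\epsilon$ for $y\leq y_\epsilon$, so the event $\{\mathcal{V}(F(\decisionvariable)) > \epsilon\}$ equals $\{J(\decisionvariable) > y_\epsilon\}$ and hence has probability $1-\prob_{\uniform[\decisionspace]}[J(\decisionvariable)\leq y_\epsilon]$. Letting $y \downarrow y_\epsilon$, the sets $\{J(\decisionvariable) < y\}$ decrease to $\{J(\decisionvariable)\leq y_\epsilon\}$, so continuity from above gives $\prob_{\uniform[\decisionspace]}[J(\decisionvariable)\leq y_\epsilon] = \lim_{y\downarrow y_\epsilon} G(y) \geq \epsilon$, since each $G(y)>\epsilon$. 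This delivers the required $\leq 1-\epsilon$ and, via the reduction above, the theorem.

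The main obstacle is precisely this last step in full generality. Because Assumption~\ref{assump:percent_assump} permits a finite $\decisionspace$ or a cost $J$ with plateaus, the transformed variable $G(J(\decisionvariable))$ need not be uniform on $[0,1]$ and the cost law may carry atoms — in particular an atom sitting exactly at the threshold $y_\epsilon$. Handling that atom correctly (passing to the right-hand limit to obtain $\prob_{\uniform[\decisionspace]}[J(\decisionvariable)\leq y_\epsilon]\geq\epsilon$, rather than using the strict-inequality value $G(y_\epsilon)\leq\epsilon$, which points the wrong way) is exactly what turns the clean equality of the continuous probability-integral-transform case into the desired inequality; the remaining reductions are essentially bookkeeping.
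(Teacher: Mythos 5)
Your proof is correct, but it takes a genuinely different route from the paper: the paper's entire proof of Theorem~\ref{thm:prob_optimality} is a one-line citation (``a direct application of Theorem~7 in~\cite{akella2022sample} and Theorem~2 in~\cite{akella2022scenario}''), whereas you reconstruct the result from first principles. Your three moves are all sound: (i) under $\uniform[\decisionspace]$ the inner probability in~\eqref{eq:prob_ver} equals $1-\mathcal{V}(F(\decisionvariable^*_N))$, so the two displayed claims describe the identical event and only~\eqref{eq:percent_opt} needs proof (with the $\epsilon=0$ case of~\eqref{eq:prob_ver} vacuous); (ii) monotonicity of $G(y)=\prob_{\uniform[\decisionspace]}[J(\decisionvariable)<y]$ plus i.i.d.\ sampling collapses the $N$-sample statement to the one-sample bound $\prob_{\uniform[\decisionspace]}[G(J(\decisionvariable))>\epsilon]\leq 1-\epsilon$ via the order-statistics identity $\prob^N[\min_i \mathcal{V}(F(\decisionvariable_i))>\epsilon]=(\prob[\mathcal{V}(F(\decisionvariable))>\epsilon])^N$; and (iii) the quantile argument at $y_\epsilon=\inf\{y: G(y)>\epsilon\}$, where you correctly pass to the right-hand limit $\prob[J(\decisionvariable)\leq y_\epsilon]=\lim_{y\downarrow y_\epsilon}G(y)\geq\epsilon$ rather than using $G(y_\epsilon)\leq\epsilon$, which would point the wrong way. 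What each approach buys: the paper's citation is short but leaves the reader to verify that the hypotheses of the external theorems actually cover the setting of Assumption~\ref{assump:percent_assump}; your argument is self-contained and, importantly, handles exactly the cases that assumption permits --- finite decision spaces and costs with plateaus, i.e.\ atomic cost distributions --- where the clean probability-integral-transform equality fails and only the inequality survives. That atom-handling is the one place a naive proof would break, and you identified it explicitly.
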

\begin{proof}
    This is a direct application of Theorem~7 in~\cite{akella2022sample} and Theorem~2 in~\cite{akella2022scenario}.
\end{proof}

To clarify then, this is the central result on probabilistic optimality --- derived from existing results on black-box risk-aware verification --- that we will exploit in the remainder of the paper to address the three aforementioned questions.  Our efforts regarding the first problem statement will follow.
\subsection{Percentile-Based Input Selection}
\label{sec:percentile_input}
Problem~\ref{prob:percentile} references the development of an efficient method to solve~\eqref{eq:general_FTOCP}.  To that end, we aim to take a percentile method that exploits equation~\eqref{eq:percent_opt} in Theorem~\ref{thm:prob_optimality}.  As a result, our corollary in this vein stems directly from Theorem~\ref{thm:prob_optimality}, though we will make one clarifying assumption.
\begin{assumption}
\label{assump:percent_solution}
Let $J$ and $\mathbb{U}$ be as per~\eqref{eq:general_FTOCP}, let $\mathcal{V}$ be as per~\eqref{eq:volume_fraction} with respect to the decision space $\mathbb{U}(x_k,d)$, and let $F$ be as per~\eqref{eq:falsifying_set} with respect to this cost $J$ and $\mathbb{U}(x_k,d)$.  Furthermore, let $J$ be bounded over $\mathbb{U}(x_k,d)$, and let $\mathbb{U}(x_k,d)$ be a set of bounded volume (or finitely many elements if a discrete set) for any choice of $(x_k,d) \in \mathcal{X}\times\mathcal{D}$ (these sets defined in~\eqref{eq:general_sys}).  Finally, let $\{(\mathbf{u}_i, J(\mathbf{u}_i,x_k,d))\}_{i=1}^N$ be a set of $N$ uniformly sampled sequences $\mathbf{u}_i$ from $\mathbb{U}(x_k,d)$ with their corresponding costs, and let $\mathbf{u}^*_N$ be the (potentially) non-unique sequence with minimum sampled cost.
\end{assumption}
\begin{corollary}
\label{corr:percent_opt_FTOCP}
Let Assumption~\ref{assump:percent_solution} hold and let $\epsilon \in (0,1]$.  Then, $\mathbf{u}^*_N$ is in the $100(1-\epsilon)\%$-ile with respect to minimizing $J$ at the current system and environment state $(x_k,d)$ with minimum confidence $1-(1-\epsilon)^N$, \textit{i.e.},
\begin{equation}
    \prob^N_{\uniform[\mathbb{U}(x_k,d)]}\left[\mathcal{V}(F(\mathbf{u}^*_N)) \leq \epsilon \right] \geq 1-(1-\epsilon)^N.
\end{equation}
\end{corollary}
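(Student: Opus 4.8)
The plan is to recognize that Corollary~\ref{corr:percent_opt_FTOCP} is precisely the instantiation of the percentile statement~\eqref{eq:percent_opt} in Theorem~\ref{thm:prob_optimality}, obtained by fixing the current state--environment pair $(x_k,d)$ and identifying the abstract decision problem~\eqref{eq:general_opt} with the concrete~\eqref{eq:general_FTOCP}. Concretely, I would set the decision variable $\decisionvariable := \mathbf{u}$, the decision space $\decisionspace := \mathbb{U}(x_k,d)$, and the cost $J(\decisionvariable) := J(\mathbf{u}, x_k, d)$, treating $(x_k,d)$ as fixed parameters throughout the argument.

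The first step is to verify that the hypotheses of Theorem~\ref{thm:prob_optimality}, namely Assumption~\ref{assump:percent_assump}, are met under Assumption~\ref{assump:percent_solution}. This is immediate: Assumption~\ref{assump:percent_solution} explicitly requires $\mathbb{U}(x_k,d)$ to have bounded volume (or finitely many elements if discrete) and $J$ to be bounded over $\mathbb{U}(x_k,d)$, which are exactly the two requirements of Assumption~\ref{assump:percent_assump}. Moreover, $\mathcal{V}$ and $F$ appearing in the corollary are, by Assumption~\ref{assump:percent_solution}, defined as in~\eqref{eq:volume_fraction} and~\eqref{eq:falsifying_set} with respect to this same decision space and cost, so the percentile objects coincide verbatim with those in the theorem.

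The second step is to match the sampling model and substitute. Assumption~\ref{assump:percent_solution} supplies $N$ sequences $\mathbf{u}_i$ drawn from $\uniform[\mathbb{U}(x_k,d)]$ together with their costs, and designates $\mathbf{u}^*_N$ as the minimizer of the sampled cost; these are exactly the quantities $\decisionvariable_i$ and $\decisionvariable^*_N$ of Theorem~\ref{thm:prob_optimality}. Inserting them into~\eqref{eq:percent_opt} yields, for any $\epsilon \in (0,1]$,
\[
\prob^N_{\uniform[\mathbb{U}(x_k,d)]}\left[\mathcal{V}(F(\mathbf{u}^*_N)) \leq \epsilon \right] \geq 1-(1-\epsilon)^N,
\]
which is precisely the claimed bound.

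The only conceptual subtlety --- and hence the step I would treat most carefully --- is that $\mathbb{U}$ and $J$ depend on $(x_k,d)$, whereas Theorem~\ref{thm:prob_optimality} is stated for a static decision space and cost. The resolution is that the corollary's guarantee is a conditional statement evaluated at a fixed $(x_k,d)$: once those arguments are frozen, the FTOCP instance becomes a genuine static minimization problem to which the theorem applies without modification. I would therefore emphasize that both the probability and the confidence are computed pointwise in $(x_k,d)$, so that no additional regularity of the map $(x_k,d) \mapsto \mathbb{U}(x_k,d)$ is required to obtain the per-state conclusion.
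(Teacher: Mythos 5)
Your proposal is correct and follows exactly the paper's approach: the paper's own proof is the one-line invocation ``Use equation~\eqref{eq:percent_opt} in Theorem~\ref{thm:prob_optimality},'' and your instantiation of the decision space as $\mathbb{U}(x_k,d)$, the cost as $J(\cdot,x_k,d)$, and the samples as those supplied by Assumption~\ref{assump:percent_solution} is precisely the implicit substitution the paper relies on. Your added remark that the guarantee is pointwise in the frozen pair $(x_k,d)$ is a sound and useful clarification, not a deviation.
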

\begin{proof}
Use equation~\eqref{eq:percent_opt} in Theorem~\ref{thm:prob_optimality}.
\end{proof}

\begin{figure}[t]
    \centering
    \includegraphics[width = \columnwidth]{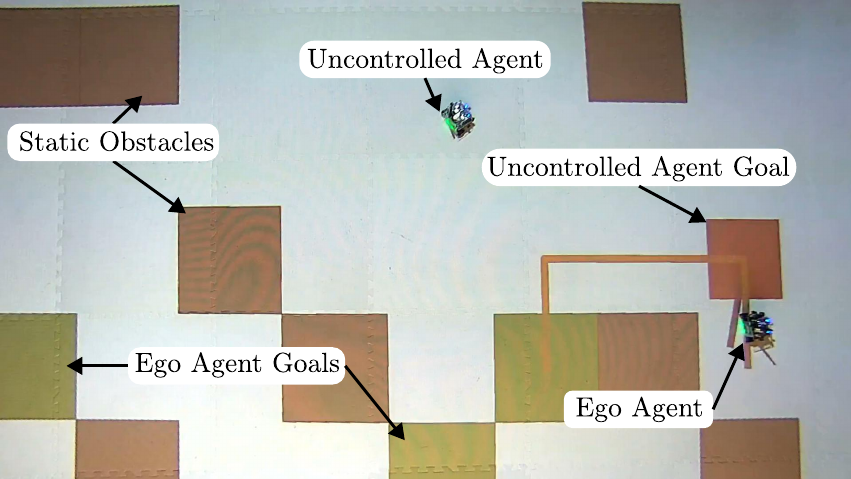}
    \caption{Experimental setup for Robotarium reach-avoid tests.}
    \vspace{-0.2 in}
    \label{fig:robotarium_setup}
\end{figure}

In short, Corollary~\ref{corr:percent_opt_FTOCP} tells us that if we have a finite-time optimal control problem of the form in~\eqref{eq:general_FTOCP}, where for some system and environment state $(x_k,d)$, the cost function $J$ is bounded over a bounded decision space $\mathbb{U}(x_k,d)$, then we can take a percentile approach to identify input sequences that are better than a large fraction of the space of all feasible input sequences.  Notably, this statement is made independent of the convexity, or lack thereof, of~\eqref{eq:general_FTOCP}, making it especially useful for non-convex MPC.  Furthermore, as is done in Section~\ref{sec:quadruped} to follow, one can further optimize over the outputted percentile solution $\mathbf{u}^*_N$ via gradient descent --- should gradient information be available.  The resulting solution then retains the same confidence on existing within the same percentile, while also being efficient to calculate.  This does introduce new questions, however.  Namely, will a percentile solution always exist, and how efficient is the calculation of these sequences on a given hardware?  These questions will be answered in the sections to follow.

\subsection{Determining Recursive Feasibility}
\label{sec:recursive_feasibility}
Problem~\ref{prob:recursive_feasibility} references the development of an algorithm to efficiently determine the recursive feasibility of~\eqref{eq:general_FTOCP}.  To ease the statement of the theoretical results to follow, we indicate via $|\mathbb{U}(x_k,d)|$ the ``size" of the constraint space $\mathbb{U}(x_k,d)$ for~\eqref{eq:general_FTOCP}, with $|\varnothing| = 0$.  Additionally, we will assume that there exists some controller $U$ that either utilizes the aforementioned percentile method in Section~\ref{sec:percentile_input} or some other technique to produce (potentially approximate) solutions to~\eqref{eq:general_FTOCP}, \textit{i.e.}
\begin{equation}
\label{eq:controller}
\exists~U: \mathcal{X} \times \mathcal{D} \to \mathcal{U} \suchthat U(x,d) = u \in \mathcal{U}    
\end{equation}
Furthermore, we will indicate via the following notation, the evolution of our system under this controller $U$, provided an initial system and environment state:
\begin{equation}
    x^+[x,d] = f(x,U(x,d),d).
\end{equation}
This allows us to formally define recursive feasibility.
\begin{definition}
\label{def:recursive_feasibility}
An optimal controller of the form in~\eqref{eq:general_FTOCP} is \textit{recursively feasible} if and only if for all system and environment states, the feasible space for~\eqref{eq:general_FTOCP} is non-empty for successive timesteps, \textit{i.e.} $\forall~(x,d) \in \mathcal{X} \times \mathcal{D},~|\mathbb{U}(x,d)| > 0 \implies |\mathbb{U}(x^+[x,d],d)| > 0$.
\end{definition}

\begin{figure}[t]
\begin{tikzpicture}[]
    \node[anchor=south west,inner sep=0] (image) at (0,0) {\includegraphics[width=\columnwidth]{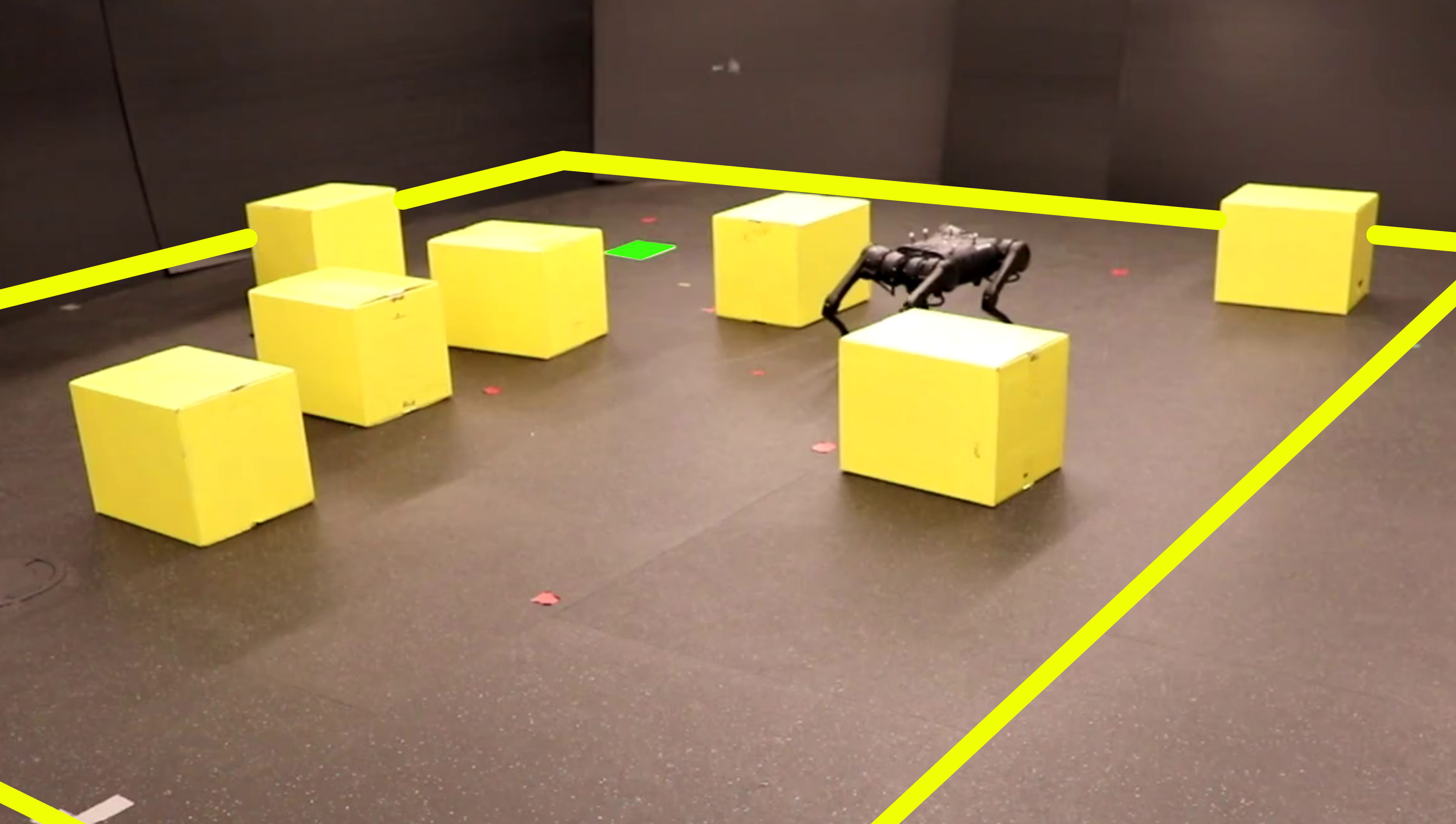}};
        \begin{scope}[x={(image.south east)},y={(image.north west)}]
        \node[] (quad) at (0.66,0.7) {};
        \node[fill=white,rounded corners=1mm] (quad_lbl) at (0.8,0.9) {Agent};
        \node[] (goal) at (0.44,0.72) {};
        \node[fill=white,rounded corners=1mm] (goal_lbl) at (0.3,0.9) {Goal};
        \node[] (obs_1) at (0.2,0.35) {};
        \node[] (obs_2) at (0.6,0.4) {};
        \node[fill=white,rounded corners=1mm] (obs_lbl) at (0.4,0.2) {Obstacles};
        \draw [white,-latex,line width=1mm,line cap=round] (quad_lbl) -- (quad);
        \draw [white,-latex,line width=1mm,line cap=round] (goal_lbl) -- (goal);
        \draw [white,-latex,line width=1mm,line cap=round] (obs_lbl) -- (obs_1);
        \draw [white,-latex,line width=1mm,line cap=round] (obs_lbl) -- (obs_2);
    \end{scope}
\end{tikzpicture}
    \caption{Experimental setup for Quadruped reach-avoid tests.}
    \vspace{-0.2 in}
    \label{fig:quad_setup}
\end{figure}

As motivated earlier, we can express recursive feasibility determination as an optimization problem.  Specifically, let our cost function $C$ be as follows:
\begin{gather}
    \mathbb{T}(x,d) = |\mathbb{U}(x,d)| > 0~\mathrm{and}~|\mathbb{U}(x^+[x,d],d)|>0, \\
    \label{eq:recursive_feasibility_cost}
    C(x,d) =
    \begin{cases}
    1 & \mbox{if~} \mathbb{T}(x,d) = \mathrm{True},
    \\
    0 & \mbox{else}.
    \end{cases}
\end{gather}
We can generate a minimization problem provided this cost function $C$ over the joint state space $\mathcal{X} \times \mathcal{D}$:
\begin{equation}
    \label{eq:opt_recursive_feasibility}
    \min_{x \in \mathcal{X},~d \in \mathcal{D}}~C(x,d).
\end{equation}
If the solution to~\eqref{eq:opt_recursive_feasibility} were positive, then~\eqref{eq:general_FTOCP} is recursively feasible.  Likewise, if the solution were negative, then there exists a counterexample.  As a result, not only can we express recursive feasibility determination as an optimization problem, but this problem is also of the same form as in~\eqref{eq:general_opt}, permitting a probabilistic solution approach as expressed in the following assumption and corollary.
\begin{assumption}
    \label{assump:recursive_feasible}
    Let $C$ be as per~\eqref{eq:recursive_feasibility_cost}, let $\mathcal{X},\mathcal{D}$ be as per~\eqref{eq:general_sys} and also be spaces of bounded volume, let $\{C(x_i,d_i)\}_{i=1}^N$ be a set of $N$ cost evaluations of decision tuples $(x_i,d_i)$ sampled independently via $\uniform[\mathcal{X} \times \mathcal{D}] \triangleq \mu$, let $\zeta^*_N$ be the minimum cost evaluation, and let $\epsilon \in [0,1]$.
\end{assumption}
\begin{corollary}
    \label{corr:recursive_feasible}
    Let Assumption~\ref{assump:recursive_feasible} hold.  Then if $\zeta^*_N = 1$,~\eqref{eq:general_FTOCP} is successively feasible with minimum probability $1-\epsilon$ and with minimum confidence $1-(1-\epsilon)^N$.
\end{corollary}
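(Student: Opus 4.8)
The plan is to recognize the recursive-feasibility program~\eqref{eq:opt_recursive_feasibility} as a direct instance of the generic minimization~\eqref{eq:general_opt}, and then to invoke the probabilistic-lower-bound half of Theorem~\ref{thm:prob_optimality}, namely equation~\eqref{eq:prob_ver}. This is the complementary statement to the one used for Corollary~\ref{corr:percent_opt_FTOCP}: there we exploited the percentile guarantee~\eqref{eq:percent_opt}, whereas here the relevant object is the probabilistic lower bound on the achievable cost. First I would verify the hypotheses. By Assumption~\ref{assump:recursive_feasible}, $\mathcal{X}$ and $\mathcal{D}$ have bounded volume, so the product decision space $\mathcal{X}\times\mathcal{D}$ does as well; and since $C$ takes values in $\{0,1\}$ by~\eqref{eq:recursive_feasibility_cost}, it is trivially bounded with $m=0$, $M=1$. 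Hence Assumption~\ref{assump:percent_assump} is met under the identifications $\decisionvariable = (x,d)$, $\decisionspace = \mathcal{X}\times\mathcal{D}$, cost $J = C$, and sampling distribution $\uniform[\decisionspace] = \mu$, so the sample $\{C(x_i,d_i)\}_{i=1}^N$ with minimum $\zeta^*_N$ is exactly the object Theorem~\ref{thm:prob_optimality} requires.

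Next I would instantiate~\eqref{eq:prob_ver}, which yields
\[
\prob^N_{\mu}\left[\prob_{\mu}\left[C(x,d) \geq \zeta^*_N\right] \geq 1-\epsilon\right] \geq 1-(1-\epsilon)^N.
\]
The crucial step is then to substitute the hypothesis $\zeta^*_N = 1$. Because $C$ is binary-valued, the event $\{C(x,d)\geq 1\}$ collapses exactly onto $\{C(x,d)=1\}$, which by~\eqref{eq:recursive_feasibility_cost} is precisely $\{\mathbb{T}(x,d)=\mathrm{True}\}$, i.e.\ the event that the controller is feasible at $(x,d)$ and remains feasible one step later at $x^+[x,d]$. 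Substituting gives
\[
\prob^N_{\mu}\left[\prob_{\mu}\left[\mathbb{T}(x,d)=\mathrm{True}\right] \geq 1-\epsilon\right] \geq 1-(1-\epsilon)^N,
\]
which reads exactly as the claim: over a fresh draw $(x,d)\sim\mu$ the controller is successively feasible with probability at least $1-\epsilon$, and this inner statement holds with confidence at least $1-(1-\epsilon)^N$ over the realized $N$-sample draw.

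The main obstacle I anticipate is interpretive rather than computational. One must keep the two nested probabilities cleanly separated --- the outer $\prob^N_\mu$ supplying the confidence level $1-(1-\epsilon)^N$ and the inner $\prob_\mu$ supplying the probability level $1-\epsilon$ --- and one must justify that the binary structure of $C$ makes $\{C \geq \zeta^*_N\}$ reduce to the successive-feasibility event only in the case $\zeta^*_N = 1$ (for any sampled zero the bound is vacuous). I would also flag, for precision, that $\mathbb{T}(x,d)=\mathrm{True}$ is a \emph{sufficient}, slightly stronger condition than the vacuous-case implication appearing in Definition~\ref{def:recursive_feasibility}, so the guarantee we certify is on the conjunction of present and successor feasibility --- which is the intended reading of ``successively feasible.'' Notably, no terminal-set or control-invariance assumption enters at any point, which is the entire advantage of the black-box route.
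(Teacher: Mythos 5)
Your proposal is correct and follows essentially the same route as the paper's proof: instantiate equation~\eqref{eq:prob_ver} of Theorem~\ref{thm:prob_optimality} with cost $C$ and distribution $\mu$, substitute $\zeta^*_N = 1$, and use the binary structure of $C$ to read off the successive-feasibility event. Your added checks of Assumption~\ref{assump:percent_assump} and your observation that the conjunction $\mathbb{T}(x,d)=\mathrm{True}$ is slightly stronger than the implication in Definition~\ref{def:recursive_feasibility} are careful refinements of, not departures from, the paper's argument.
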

\begin{proof}
    Equation~\eqref{eq:prob_ver} in Theorem~\ref{thm:prob_optimality} tells us that
    \begin{equation}
    \prob^N_{\mu}\left[\prob_{\mu}\left[C(x,d) \geq \zeta^*_N\right] \geq 1-\epsilon \right] \geq 1-(1-\epsilon)^N.
    \end{equation}
    By definition of $C$ in~\eqref{eq:recursive_feasibility_cost}, if $\zeta^*_N = 1$, then with minimum probability $1-\epsilon$, $|\mathbb{U}(x,d)| > 0\implies |\mathbb{U}(x^+[x,d],d)|>0$.  In other words, with minimum probability $1-\epsilon$, if~\eqref{eq:general_FTOCP} were feasible at the prior time step, then it will also be feasible at the next time step, \textit{i.e.} successively feasible.
\end{proof} 

In other words, Corollary~\ref{corr:recursive_feasible} tells us that to probabilistically determine whether a given finite-time optimal control problem is successively feasible, it is sufficient to identify at least one input in the constraint space for successive optimization problems starting at $N$ randomly sampled state pairs $(x,d)$.  Determining at least one such input could be achieved by querying the corresponding controller $U$ or some other method desired by the practitioner.  Notably, this does not guarantee recursive feasibility as that would correspond to the optimal value of~\eqref{eq:opt_recursive_feasibility} being positive.  However, with arbitrarily high probability, we can provide guarantees that even hardware controllers will be successively feasible for sampled state pairs $(x,d) \in \mathcal{X} \times \mathcal{D}$, which is the underlying requirement for recursive feasibility as per Definition~\ref{def:recursive_feasibility}.

\subsection{Determining Hardware-Specific Controller Runtimes}
\label{sec:time_complexity}

Lastly, Problem~\ref{prob:runtimes} references the development of an algorithm to efficiently identify maximum controller runtimes on existing system hardware.  To address this from a probabilistic perspective, we will first define some notation.  To start, we will use the same controller $U$ as per equation~\eqref{eq:controller}.  We also denote via $T$ a timing function that outputs the evaluation time for querying the controller $U$ at a given state pair $(x,d)$, \textit{i.e.} $T:\mathcal{X} \times \mathcal{D} \to \mathbb{R}_{++}$.  Then we can nominally express maximum controller runtime determination as an optimization problem:
\begin{equation}
        \label{eq:timing_opt}
        \max_{x \in \mathcal{X},~d \in \mathcal{D}}~T(x,d).
\end{equation}
\noindent Under the fairness assumption that the controller does have a bounded runtime, however, identification of a probabilistic maximum runtime is solvable via probabilistic optimization procedures as outlined by Theorem~\ref{thm:prob_optimality}.  In a similar fashion as prior, we will state a clarifying assumption and the formal corollary statement will follow.

\begin{assumption}
    \label{assump:runtimes}
    Let $T$ be as per~\eqref{eq:timing_opt}, let $\mathcal{X},\mathcal{D}$ be as per~\eqref{eq:general_sys} and be of bounded volume, let $\{T(x_i,d_i)\}_{i=1}^N$ be a set of $N$ controller runtimes for state pairs $(x_i,d_i)$ sampled independently via $\uniform[\mathcal{X} \times \mathcal{D}] \triangleq \mu$, let $\zeta^*_N$ be the maximum runtime, and let $\epsilon \in [0,1]$.
\end{assumption}

\begin{corollary}
    \label{corr:run_times}
    Let Assumption~\ref{assump:runtimes} hold.  Then, the probability of sampling a state pair whose controller runtime is at most $\zeta^*_N$ is at-least $1-\epsilon$ with confidence $1-(1-\epsilon)^N$, \textit{i.e.}
    \begin{equation}
        \prob^N_{\mu}\left[\prob_{\mu}\left[T(x,d) \leq \zeta^*_N\right] \geq 1-\epsilon \right] \geq 1-(1-\epsilon)^N.
    \end{equation}
\end{corollary}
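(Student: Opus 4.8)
The plan is to recognize this statement as the maximization-dual of equation~\eqref{eq:prob_ver} in Theorem~\ref{thm:prob_optimality} and to reduce it to that result by a single sign change on the objective. The timing problem~\eqref{eq:timing_opt} is a maximization, whereas Theorem~\ref{thm:prob_optimality} is phrased for the minimization~\eqref{eq:general_opt}, so the first step I would take is to introduce the surrogate cost $J(x,d) = -T(x,d)$. Under this substitution, maximizing $T$ over $\mathcal{X}\times\mathcal{D}$ becomes minimizing $J$, and the maximum sampled runtime $\zeta^*_N$ corresponds precisely to the minimum sampled cost $-\zeta^*_N$ of $J$.

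Before invoking the theorem, I would verify that Assumption~\ref{assump:percent_assump} holds for this surrogate problem. The decision space $\mathcal{X}\times\mathcal{D}$ has bounded volume by Assumption~\ref{assump:runtimes}, so the volume condition is immediate. The boundedness of $J$ is where a little care is needed: it follows exactly from the fairness assumption that the controller has a bounded runtime, since $T$ bounded implies $J=-T$ bounded. I would make this dependence explicit, as Assumption~\ref{assump:runtimes} itself does not formally list the runtime bound even though the surrounding text invokes it.

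With Assumption~\ref{assump:percent_assump} satisfied, I would apply equation~\eqref{eq:prob_ver} to the surrogate cost $J$ with its minimum sampled value $-\zeta^*_N$, which yields
\[
\prob^N_{\mu}\left[\prob_{\mu}\left[J(x,d) \geq -\zeta^*_N\right] \geq 1-\epsilon \right] \geq 1-(1-\epsilon)^N.
\]
The final step is purely notational: the event $J(x,d) \geq -\zeta^*_N$ is identical to $-T(x,d) \geq -\zeta^*_N$, i.e.\ to $T(x,d) \leq \zeta^*_N$, so back-substitution recovers the claimed inequality verbatim.

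I do not anticipate any genuine obstacle, as the argument is a one-line reduction once the objective is negated; the only point worth flagging is the implicit reliance on the bounded-runtime hypothesis to secure the cost-boundedness half of Assumption~\ref{assump:percent_assump}. An alternative to the sign-flip would be to reprove the maximization version of~\eqref{eq:prob_ver} directly, observing that the probability a fresh sample exceeds the sampled maximum is at most $\epsilon$ on an event of $\mu^N$-probability at least $1-(1-\epsilon)^N$, but this merely re-derives the cited result and the surrogate-cost reduction is cleaner.
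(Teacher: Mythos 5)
Your proposal is correct and follows essentially the same route as the paper's proof: recast the maximization~\eqref{eq:timing_opt} as a minimization of $-T$, apply equation~\eqref{eq:prob_ver} of Theorem~\ref{thm:prob_optimality} to obtain the bound with the event $-T(x,d) \geq -\zeta^*_N$, and flip the innermost inequality. Your extra remark that the bounded-runtime hypothesis is what supplies the cost-boundedness half of Assumption~\ref{assump:percent_assump} is a fair point of added care, but it does not change the argument.
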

\begin{proof}
    Consider~\eqref{eq:timing_opt} expressed as a minimization.  Under the same assumptions, equation~\eqref{eq:prob_ver} in Theorem~\ref{thm:prob_optimality} states that
    \begin{equation}
        \prob^N_{\mu}\left[\prob_{\mu}\left[-T(x,d) \geq -\zeta^*_N\right] \geq 1-\epsilon \right] \geq 1-(1-\epsilon)^N,
    \end{equation}   
    and flipping the innermost inequality provides the result.
\end{proof}

\noindent In short then, Corollary~\ref{corr:run_times} tells us that probabilistic determination of maximum controller runtimes stems easily by recording controller runtimes for $N$ randomly sampled scenarios identified through $N$ randomly sampled system and environment state pairs $(x,d)$ from $\mathcal{X} \times \mathcal{D}$.

\section{Experimental Demonstrations}
\label{sec:experiments}
To demonstrate the contributions of our work, we applied the aforementioned methods to two reach-avoid navigation examples: 1) an A1 Unitree Quadruped \cite{Unitree} in a field of static obstacles, and 2) a Robotarium \cite{wilson2020robotarium} scenario with the controlled agent subject to both static obstacles and an additional uncontrolled, dynamic agent. 

\begin{figure*}[t]
\centering
\begin{subfigure}[t]{0.28\textwidth}
  \centering
  \includegraphics[width=0.95\columnwidth]{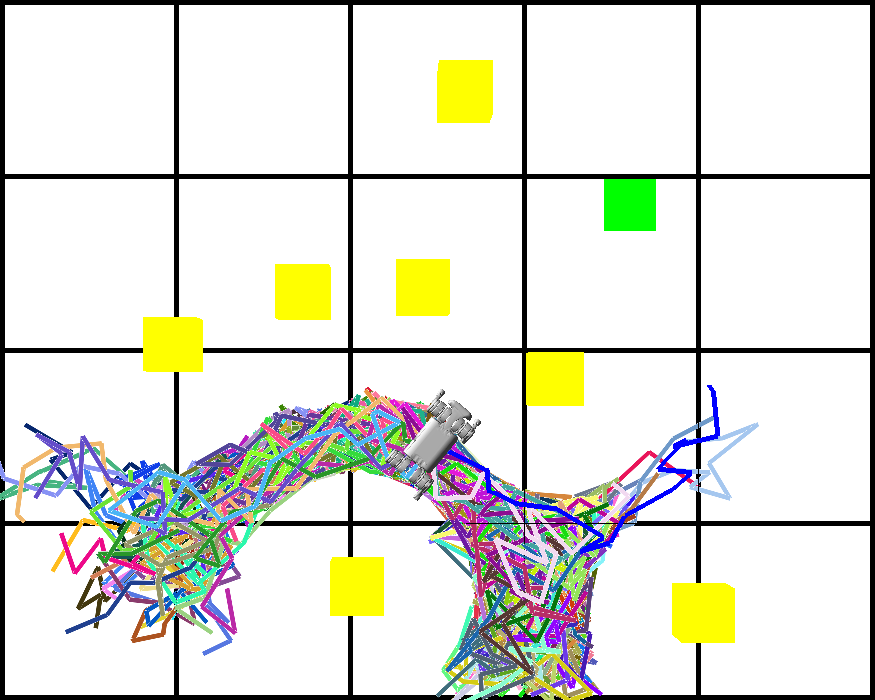}
  \caption{$\{(\mathbf{u}_i,J(\mathbf{u}_i,x_k,d))\}_{i=1}^N$}
\end{subfigure}%
\begin{subfigure}[t]{0.08\textwidth}
\centering
\begin{tikzpicture}
\node[anchor=center] at (0,1.65) {\Huge$\rightarrow$};
\node[anchor=center] at (0,0) {};
\end{tikzpicture}
\end{subfigure}%
\begin{subfigure}[t]{0.28\textwidth}
  \centering
  \includegraphics[width=0.95\columnwidth]{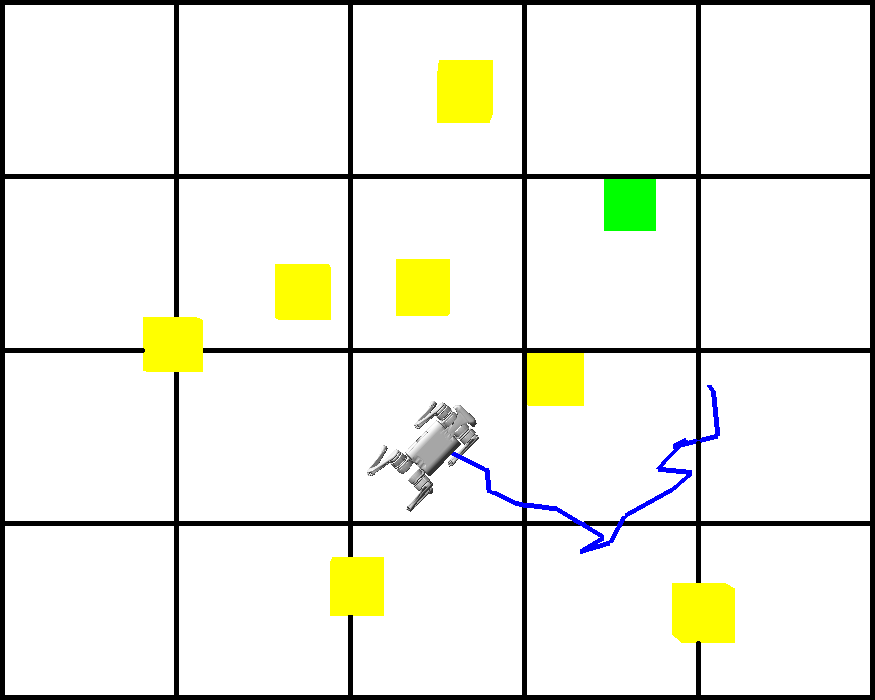}
  \caption{$\argmin\limits_{\mathbf{u}\in\{(\mathbf{u}_i,J(\mathbf{u}_i,x_k,d_k))\}_{i=1}^N} J(\mathbf{u}, x_k, d_k)$}
\end{subfigure}%
\begin{subfigure}[t]{0.08\textwidth}
\centering
\begin{tikzpicture}
\node[anchor=center] at (0,1.65) {\Huge$\rightarrow$};
\node[anchor=center] at (0,0) {};
\end{tikzpicture}
\end{subfigure}%
\begin{subfigure}[t]{0.28\textwidth}
  \centering
  \includegraphics[width=0.95\columnwidth]{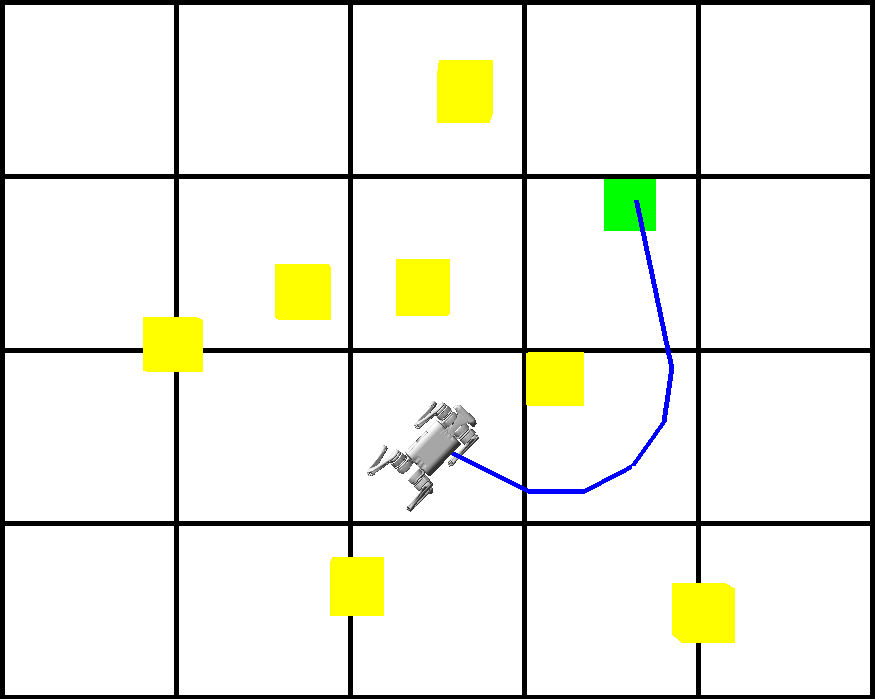}
  \caption{$\mathbf{u}_{n+1} = \mathbf{u}_n - \nabla J(\mathbf{u}_n)$ such that $\mathbf{u} \in \mathbb{U}(x_k,d_k) \subseteq \mathcal{U}^H$}
\end{subfigure}
\caption{Solving the FT-OCP for the quadruped reach-avoid experiment. (a) generates uniformly random feasible input sequence samples. (b) selects the best sample according to cost function $J(\mathbf{u}, x_k, d_k)$. Finally, (c) leverages the differentiability of $J$ to further improve the choice of $\mathbf{u}$ via constrained gradient descent.}
\label{fig:quad_ft-ocp_solve}
\end{figure*}

\subsection{Quadrupedal Walking}
\label{sec:quadruped}

\newidea{Reach Avoid Navigation Task:}
In the quadruped example, the agent is tasked to reach a specific goal location (green) while avoiding static obstacles (yellow) within a 5m by 4m space--the agent and obstacles move and can be placed continuously within this space.  The set of all environments $\mathcal{D}$ corresponds to the set of all setups, including goals, robot starting locations, and obstacles, that satisfy the aforementioned conditions while allowing for at-least one feasible path to the goal. Figure~\ref{fig:quad_setup} depicts an example setup, with Figure~\ref{fig:example_quad_setup} showing multiple examples of viable environments in $\mathcal{D}$.

\newidea{FT-OCP formulation:}
We formulated quadrupedal navigation as an optimal control problem of the form in~\eqref{eq:general_FTOCP}.  We consider as states, the position of the robot within a bounded rectangle $\mathcal{X} = [0,5] \times [0,4]$. Individual inputs are discrete changes in position with bounded magnitude, with corresponding $H$-length input sequence $\mathbf{u}$ a finite horizon of positional waypoints.  Mathematically, the state-dependent subset of permissible sequences $\mathcal{U}^H_p(x)$ is as follows, with $j\in[0,1,\dots,H-2]$:
\begin{equation}
    \mathcal{U}^H_p(x) = \left\{\mathbf{u} \in \mathcal{U}^H~\Bigg|
    \begin{gathered}
    \|u^0 - x\| \leq 0.03,~\mathrm{and~},\\
    \|u^{j+1}-u^j\| \leq 0.03.
    \end{gathered}
    \right\}
\end{equation}
$\mathbb{U}(x_k,d)$ then further constrains $\mathbf{u}$ to remain within a feasible set of states via a discrete barrier-like condition. To define that feasible state set, for $D$ obstacle positions let $d =[d_1^T,d_2^T,\dots,d_D^T]^T \in \mathbb{R}^{2\times D}$.  Then with a collision radius $r$, the feasible state set is:
\begin{align}
\mathcal{F}(d) = \{x\in \mathcal{X} \;|\; ||x-d_j|| \geq r\; \forall j=1,...,D\}.
\end{align}
Then we can define the overall constrained input space $\mathbb{U}(x,d)$ as follows, with $x^0 = x$, $x^{j+1} = f(x^j,u^j,d)$, and $\forall~\ell \in 0,1,\dots,H$:
\begin{equation}  
\label{eq:quad_feasible_set}
\mathbb{U}(x,d) = \left\{\mathbf{u}\in \mathcal{U}^H_p(x)~|~ x^{\ell} \in \mathcal{F}(d)\right\}.
\end{equation}
Here, the discrete-time dynamics are simply $f(x,u,d) = x + u$. Finally, with goal state $x_d$, we have our cost function $J$ as follows, again with $x^0 = x$ and $x^{j+1} = f(x^j,u^j,d)$:
\begin{align}
\label{eq:quad_cost}
J(\mathbf{u},x,d) = 10||x^H - x_d|| + \sum_{i=0}^{H-1} ||x^{i+1}-x^i||.
\end{align}
This cost simultaneously rewards the final waypoint when closer to the goal and a shorter overall path length.  As a result, the overall finite-time optimal control problem is:
\begin{align}
\label{eq:quad_FTOCP} 
\mathbf{u}^* = & ~\argmin_{\mathbf{u} \in \mathcal{U}^H}~& & J(\mathbf{u}, x_k, d)~\mathrm{as~per~}\eqref{eq:quad_cost}, \\
& \mathrm{subject~to~} & & \mathbf{u} \in \mathbb{U}(x_k,d)~\mathrm{as~per~}\eqref{eq:quad_feasible_set}.
\end{align}

\newidea{Solving the FT-OCP:}
To solve~\eqref{eq:quad_FTOCP}, we employ the procedure described in Section \ref{sec:percentile_input}.  We directly sample the input space $\mathcal{U}^H$ and employ rejection sampling to generate samples $\mathbf{u} \in \mathbb{U}(x_k,d)$, until we collect $1000$ such samples. From this collection of samples, we choose the minimum cost sample by evaluating $J(\mathbf{u},x_k,d)$. This sample meets our guarantees as described in Corollary~\ref{corr:percent_opt_FTOCP}. However, we recognize that our cost function is differentiable in $\mathbf{u}$, and we can employ constrained gradient descent \cite{boyd2003subgradient} to further improve the solution. This process is illustrated in Figure~\ref{fig:quad_ft-ocp_solve}.

\newidea{Experiments and Results:} 
Tests were performed for both random and curated obstacle locations, with care taken to reject samples without a feasible path to the goal. The quadruped was given a random start position and orientation, and a fixed goal, $x_d$. \eqref{eq:quad_FTOCP} was solved using a Python implementation of the above procedure at $\sim$1.5 Hz, taking $x_k$ to be the position of the quadruped as measured by an Optitrack motion capture system. An IDQP-based walking controller \cite{ubellacker2023icra} tracked the computed plan, with tangent angles along the plan used as desired quadruped heading.

By Corollary~\ref{corr:percent_opt_FTOCP}, choosing the best out of $1000$ uniformly chosen waypoint sequences implies that the best sequence $\mathbf{u}^*_N$ should be in the $99\%$-ile with $99.995\%$ confidence.  This is indeed the case as can be seen in the data portrayed at the top of Figure~\ref{fig:quadruped_data}, corroborating Corollary~\ref{corr:percent_opt_FTOCP}.  Both Corollaries~\ref{corr:recursive_feasible} and~\ref{corr:run_times} were also corroborated by recording successive feasibility and controller runtimes for $1000$ randomized instances of the percentile method applied to~\eqref{eq:quad_FTOCP}.  In all cases, the controller was successively feasible, and the maximum controller runtime was $0.92$ seconds.  Comparing against another $5000$ random samples affirms that the reported maximum runtime exceeded the $99\%$-ile cutoff, while the controller was successively feasible in all instances as well.  The data for runtimes is shown on the bottom in Figure~\ref{fig:quadruped_data}.  Qualitatively speaking, however, the proposed procedure produces a valid, collision-free plan in all tested scenarios. This plan ultimately leads to the quadruped reaching the desired goal in many scenarios. However, some obstacle placements lead to local minima that cannot be escaped, as this is a finite-time method. Increasing the horizon $H$ allows for success in these conditions, but requires a trade-off in execution time. These results are elucidated in the supplemental video. 

\subsection{Multi-Agent Verification}
\label{sec:robotarium}
Figure~\ref{fig:robotarium_setup} depicts the reach-avoid scenario for the Robotarium~\cite{wilson2020robotarium} agents which can be modeled as unicycle systems, \textit{i.e.} with $x_k \in \mathcal{X},~u_{k} \in \mathcal{U}$:
\begin{equation}
    \label{eq:unicyle_model}
    \begin{aligned}
        x_{k+1} & = \underbrace{x_k + (\Delta t = 0.033)
        \begin{bmatrix}
        \cos\left(x_k[3]\right) & 0 \\
        \sin\left(x_k[3]\right) & 0 \\
        0 & 1
        \end{bmatrix}u_k}_{f(x_k,u_k,d)}.
    \end{aligned}
\end{equation}
Here, $\mathcal{X} = [-1.6,1.6] \times [-1.2,1.2] \times [0,2\pi]$ and $\mathcal{U} = [-0.2,0.2] \times [\frac{-\pi}{2},\frac{\pi}{2}]$.  Additionally, each agent comes equipped with a Lyapunov controller $U$ that steers the agent to a provided waypoint $w \in \mathcal{W}$:
\begin{gather}
    U: \mathcal{X} \times \mathcal{D} \times \mathcal{W} \triangleq [-1.6,1.6] \times [-1.2,1.2] \to \mathcal{U}.
\end{gather}
The environment space $\mathcal{D}$ consists of the grid locations of $8$ static obstacles on an $8 \times 5$ grid overlaid on the state space $\mathcal{X}$, the cells of $3$ goals on the same grid, the starting position in $\mathcal{X}$ of another, un-controlled moving agent that is at-least $0.3$ meters away from the ego agent of interest, and the un-controlled agent's goal cell on the same grid.  No static obstacles are allowed to overlap with any of the goals, though the un-controlled agent's goal may overlap with at least one of the goals of the ego agent, and the setup of static obstacles must always allow for there to exist at least one path to one of the ego agent's goals.  Figure~\ref{fig:example_sim_setup} shows multiple examples of environment setups within $\mathcal{D}$.

\begin{figure}[t]
    \centering
    \includegraphics[width = \columnwidth]{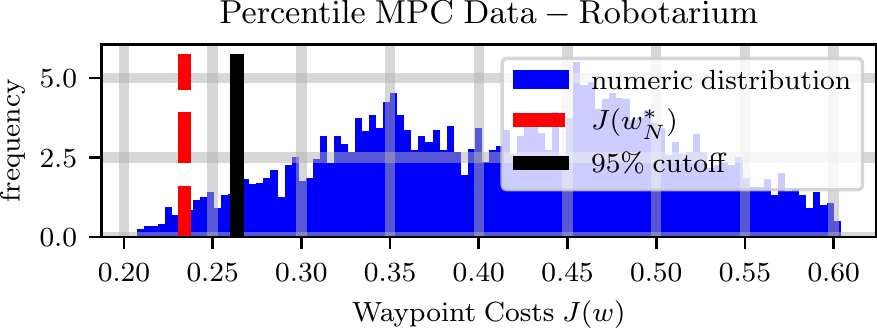} \vspace{0.025 in}\\
    \includegraphics[width = \columnwidth]{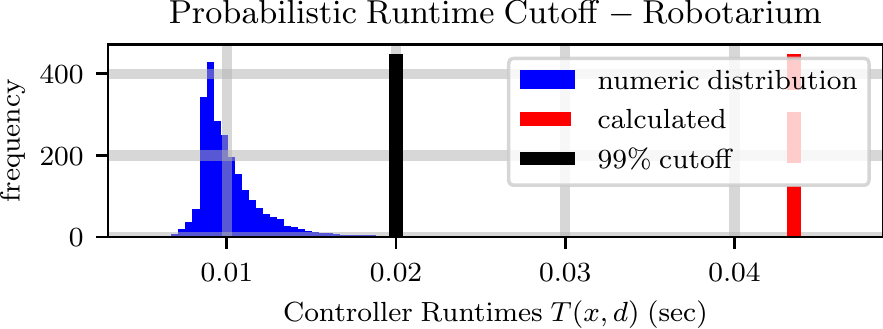}
    \caption{Robotarium Hardware data when (top) taking a percentile method to solving~\eqref{eq:augmented_NMPC}, and (bottom) calculating a probabilistic cutoff on maximum controller runtime.  In both cases, the red lines corresponding to (top) the identified waypoint and (bottom) the reported maximum controller runtime are to the left and right, respectively, of their corresponding, true probabilistic cutoffs.  In other words, the identified values satisfy their corresponding probabilistic statements, affirming Corollaries~\ref{corr:percent_opt_FTOCP} and~\ref{corr:run_times}.  Numeric distributions were calculated by evaluating $5000$ random samples.} 
    \vspace{-0.2 in}
    \label{fig:robotarium_data}
\end{figure}

\newidea{NMPC Formulation:} Based on the setup of static obstacles and goal locations on the grid, we define a function $S: \mathcal{W} \to \mathbb{R}_{+}$ that outputs the length of the shortest feasible path to a goal from a provided planar waypoint.  Should no feasible path exist from a waypoint $w \in \mathcal{W}$, $S(w) = 100$ to indicate infeasibility.  Inspired by discrete control barrier function theory~\cite{agrawal2017discrete}, we define a control barrier function $h$ which accounts for both the ego agent state $x_a$ and the un-controlled agent state $x_o$ (with $P = [I_{2 \times 2}~\mathbf{0}_{2 \times 1}]$):
\begin{equation}
    h(x_a,x_o) = \begin{cases}
        -5 & \mbox{in~static~obstacle~cell}, \\
        \|P(x_a - x_o)\| - 0.18 & \mbox{else}.
    \end{cases}
\end{equation}
Then, provided $h(x_a,x_o) \geq 0$, the ego agent hasn't crashed into a static obstacle and is maintaining at least a distance of $0.18$ m from the un-controlled agent.

This permits us to define an NMPC problem as follows with the dynamics $f$ as per~\eqref{eq:unicyle_model} and $\forall~j \in [1,2,3,4,5]$:
\begin{align}
w^*_k = &~\argmin_{w \in \mathcal{W}}~& & S(w), \label{eq:rob_NMPC} \tag{NMPC-A} \\
& \mathrm{subject~to~} & & x^{j}_{k} = f(x^{j-1}_k,u^{j-1},d), \label{eq:constr_1} \tag{a}\\
& & & x^0_k = x_k, \label{eq:constr_2} \tag{b} {\color{white} \eqref{eq:constr_2},\eqref{eq:constr_3}}\\
& & & h(x^j_{k,a},x_o) \geq 0 \label{eq:constr_3} \tag{c} \\
& & & u^{j-1} = U\left(x^{j-1}_k, d, w\right), \label{eq:constr_4} \tag{d} \\
& & & 0.05 \leq \|w - x_k\| \leq 0.2.
\end{align}
To ease sampling then, we will consider an augmented cost $J$ that outputs $100$ whenever a waypoint $w$ fails to satisfy constraints~\eqref{eq:constr_1}-\eqref{eq:constr_4} in~\eqref{eq:rob_NMPC}.  Then we define the NMPC problem to-be-solved as follows:
\begin{align}
w^*_k = &~\argmin_{w \in \mathcal{W}}~& & J(w), \label{eq:augmented_NMPC} \tag{NMPC-B} \\
& \mathrm{subject~to~} & & 0.05 \leq \|w - x_k\| \leq 0.2.
\end{align}

\begin{figure}[t]
    \centering
    \includegraphics[width = \columnwidth]{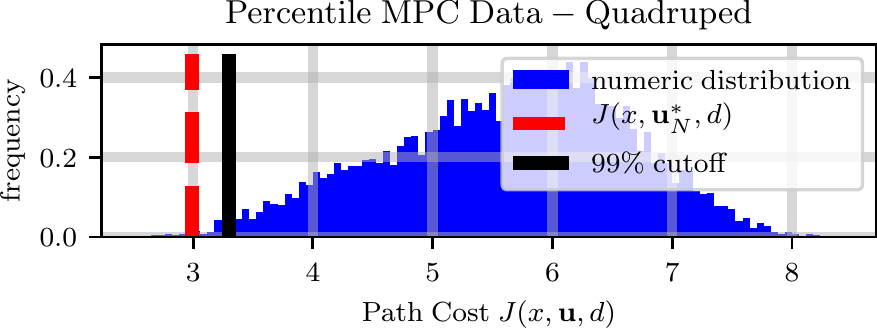} \vspace{0.025 in}\\
    \includegraphics[width = \columnwidth]{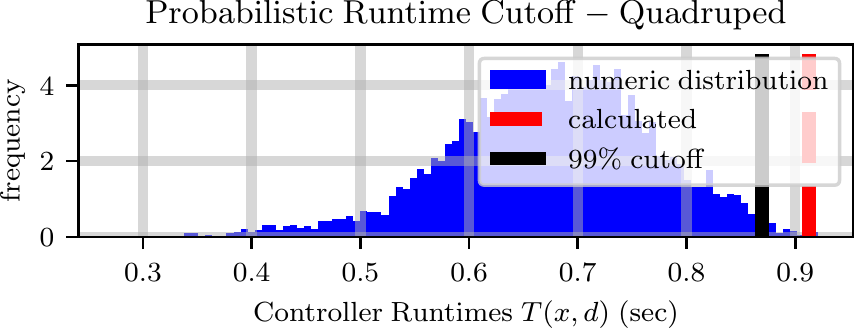}
    \caption{Quadruped Hardware data when (top) taking a percentile method to solve~\eqref{eq:quad_FTOCP}, and (bottom) calculating a probabilistic cutoff on maximum controller runtime.  In both cases, the red lines corresponding to (top) the identified path and (bottom) the reported maximum controller runtime are to the left and right, respectively, of their corresponding, true probabilistic cutoffs.  This affirms Corollaries~\ref{corr:percent_opt_FTOCP} and~\ref{corr:run_times} insofar as the identified values satisfy their corresponding probabilistic statements.  Numeric distributions were calculated by evaluating $5000$ random samples.} 
    \vspace{-0.2 in}
    \centering
    \label{fig:quadruped_data}
\end{figure}

\newidea{Results:} By Corollary~\ref{corr:percent_opt_FTOCP}, if we wish to take a percentile approach to determine a waypoint $w^*_N$ in the $95\%$-ile with $99.4\%$ confidence we need to evaluate $N = 100$ uniformly chosen waypoints from the constraint space for~\eqref{eq:augmented_NMPC}.  Figure~\ref{fig:robotarium_data} shows the cost of the outputted waypoint sequence compared against $5000$ randomly sampled values, and as can be seen, the outputted waypoint $w^*_N$ is indeed in the $95\%$-ile, confirming Corollary~\ref{corr:percent_opt_FTOCP}.  Calculating this controller's runtime in $460$ randomly sampled initial state and environment scenarios yielded a probabilistic maximum $\zeta^*_N = 0.043$ seconds.  According to Corollary~\ref{corr:run_times}, this maximum runtime should be an upper bound on the true, $99\%$ cutoff on controller runtimes with confidence $99\%$ --- and as can be seen in Figure~\ref{fig:robotarium_data}, $\zeta^*_N$ exceeds the true value.  Finally, to corroborate Corollary~\ref{corr:recursive_feasible}, we evaluated the recursive feasibility cost function $C$ as per~\eqref{eq:recursive_feasibility_cost} in each of the same $460$ randomly sampled scenarios from prior.  In each scenario, the percentile controller was successively feasible, indicating that with $99\%$ probability the controller will be successively feasible.  Evaluating the same cost for $5000$ more uniformly chosen samples resulted in the controller being successively feasible each time, corroborating Corollary~\ref{corr:recursive_feasible}.

\begin{figure}[t]
    \centering
    \includegraphics[width = \columnwidth]{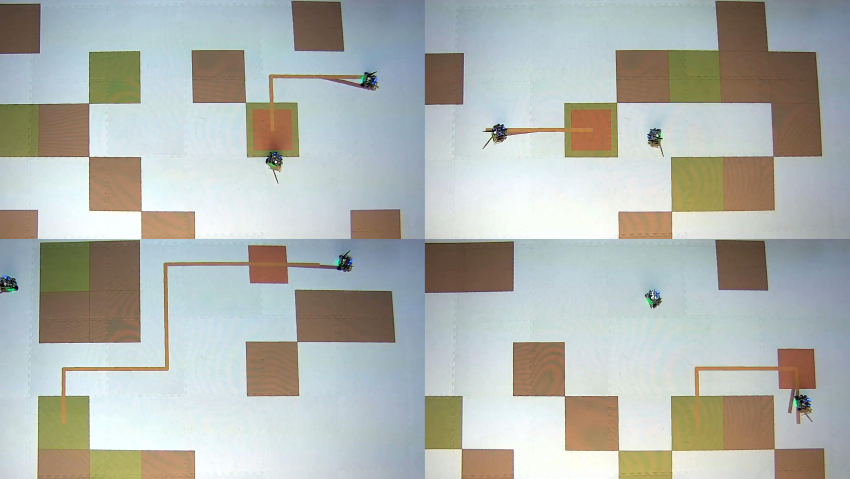}
    \caption{Experimental depictions of the randomized environments $\mathcal{D}$ for the Robotarium as described in Section~\ref{sec:robotarium}. The black squares correspond to static obstacles, the green squares correspond to goals for the ego-agent whose shortest path from its starting cell is shown in orange, and the red squares correspond to the un-controlled agent's goal.}
    \vspace{-0.2 in}
    \label{fig:example_sim_setup}
\end{figure}

\section{Conclusion}
Based on existing work in black-box risk-aware verification, we provided probabilistic guarantees for percentile approaches to solving finite-time optimal control problems, recursive feasibility of such approaches, and bounds on maximum controller runtimes.  In future work, the authors plan to explore how the generated probabilistic guarantees can be applied in other scenarios, \textit{e.g.} probabilistic planning procedures.  Secondly, we aim to bound the optimality gap between our percentile solutions and the global optimum.

\bibliographystyle{IEEEtran}
\bibliography{IEEEabrv,bib_works}

\begin{figure}[t]
    \centering
    \includegraphics[width = \columnwidth]{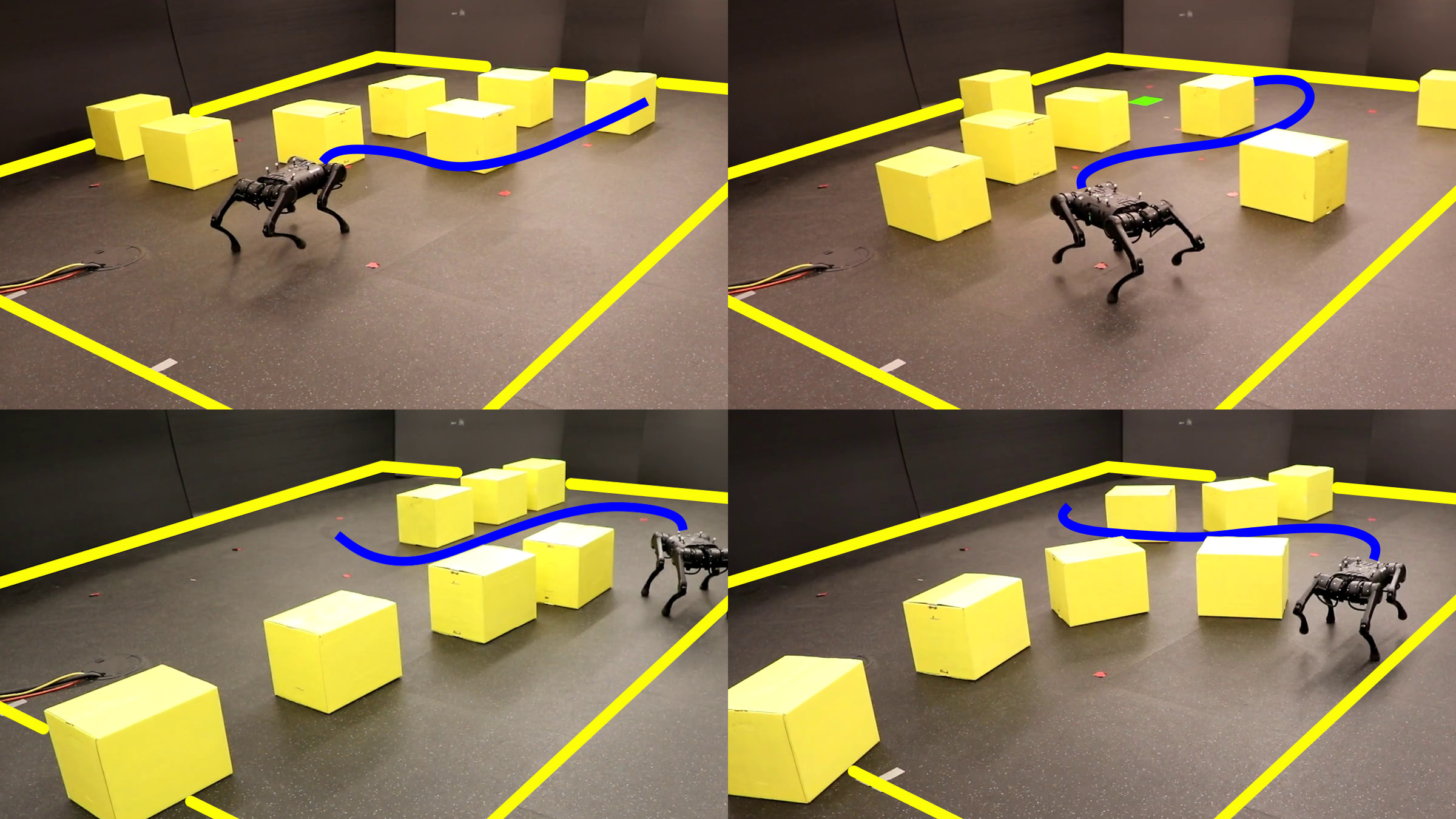}
    \caption{Depictions of the randomized environments $\mathcal{D}$ for the Quadruped experiments as described in Section~\ref{sec:quadruped}. Yellow boxes are static obstacles, and the goal is shown in green (not visible in all images). The computed plan is depicted in blue.}
    \vspace{-0.2 in}
    \label{fig:example_quad_setup}
\end{figure}

\end{document}